\newtheorem{theorem}{Theorem}[section]
\newtheorem{definition}[theorem]{Definition}
\newtheorem{example}[theorem]{Example}
\newtheorem{remark}[theorem]{Remark}
\title{Fixed Point Theory For Singh-Chatterjea Type Contractive Mappings}
\author[1]{Zouaoui Bekri\, \orcidlink{0000-0002-2430-6499}}
\affil[1]{Laboratory of fundamental and applied mathematics,
University of Oran 1, Ahmed Ben Bella,
Es-senia, 31000 Oran
Department of Sciences and Technology,
Institute of Sciences, Nour-Bachir University
Center, El-Bayadh, 32000
Algeria; z.bekri@cu-elbayadh.dz}
\author[2]{Nicola Fabiano\, \orcidlink{0000-0003-1645-2071}}
\affil[2]{``Vin\v{c}a'' Institute of Nuclear Sciences - National 
Institute of the Republic of Serbia, University of Belgrade, Mike Petrovi\'{c}a 
Alasa 12--14, 11351 Belgrade, Serbia; nicola.fabiano@gmail.com}
\date{}
\begin{document}

\maketitle

\begin{abstract}
In this paper, we introduce a new contraction condition that
combines the framework of Singh’s extension with the classical Chatterjea 
contraction. This generalized form, called the Singh–Chatterjea
contraction, is defined on the p-th iterate of a mapping. We establish
fixed point theorems for such mappings in complete metric spaces and
show that our results extend and unify both Singh’s and Chatterjea’s
classical fixed point theorems. Illustrative examples and a simple 
numerical implementation are provided to demonstrate the applicability
of the obtained results.
\end{abstract}

\noindent\textbf{Keywords:} Chatterjea contraction; Singh contraction; fixed point theory; Contractive mappings; Metric spaces; iterative contraction; existence and uniqueness.

\noindent\textbf{2020 Mathematics Subject Classification:} Primary: 47H10; Secondary: 54H25, 47H09.

\section{Introduction}
\label{sec:intro}
The Banach contraction principle \cite{Banach1922} (1922) remains one of the cornerstones of fixed point theory, with numerous applications in differential equations, integral equations, and optimization problems. Over the decades, several authors have introduced various generalizations to broaden its applicability.

In 1968, Kannan \cite{Kannan1968} introduced a new type of contraction involving the distances between points and their images, while in 1972, Chatterjea \cite{Chatterjea1972} proposed a distinct condition involving cross terms of the form $d(x,Ty)+d(y,Tx)$. Later, in 1977, Singh \cite{Singh1977} extended Kannan’s condition to the $p$-th iterate of a mapping, establishing fixed point results for a wider class of operators.

Despite these advances, little attention has been given to extending Chatterjea’s contraction in the same manner. Motivated by Singh’s approach, we propose a Singh-type generalization of the Chatterjea contraction, which unifies and extends both concepts.

The remainder of this paper is organized as follows. Section~\ref{sec:pre} revisits essential preliminaries and definitions, including Banach, Kannan, Chatterjea, and Singh contractions, establishing the foundational concepts. In Section~\ref{sec:main}, we introduce the Singh-Chatterjea contraction and prove the main fixed point theorem in complete metric spaces. This section also provides a non-trivial example of a mapping that is Singh-Chatterjea but not Banach, illustrating the genuine extension offered by our approach. Furthermore, we demonstrate that every Banach contraction eventually becomes a Singh-Chatterjea contraction after a finite number of iterations, establishing a strong unification result. Finally, Section~\ref{sec:concl} concludes the paper with a summary of findings, discusses potential applications, and suggests directions for future research

\section{Preliminaries}
\label{sec:pre}
We recall some well-known contraction conditions from the literature. And in order to research and contemplate the concepts of contractions, we recommend reading references (\cite{Banach1922,Kannan1968,Chatterjea1972,Singh1977,Singh1969,Fisher1974,Rhoades2001,Rus2001,Berinde2007,Cvetković2025}).

\begin{definition}[Banach contraction]
A mapping $T:(X,d)\to(X,d)$ is called a \textit{Banach contraction} if there exists $\alpha \in (0,1)$ such that
\[
d(Tx,Ty) \leq \alpha d(x,y), \quad \forall x,y \in X.
\]
\end{definition}

\begin{definition}[Kannan contraction]
A mapping $T:(X,d)\to(X,d)$ is called a \textit{Kannan contraction} if there exists $\alpha \in (0,\tfrac{1}{2})$ such that
\[
d(Tx,Ty) \leq \alpha \big(d(x,Tx) + d(y,Ty)\big), \quad \forall x,y \in X.
\]
\end{definition}

\begin{definition}[Chatterjea contraction]
A mapping $T:(X,d)\to(X,d)$ is called a \textit{Chatterjea contraction} if there exists $\alpha \in (0,\tfrac{1}{2})$ such that
\[
d(Tx,Ty) \leq \alpha \big(d(x,Ty) + d(y,Tx)\big), \quad \forall x,y \in X.
\]
\end{definition}

\begin{definition}[Singh contraction]
A mapping $T:(X,d)\to(X,d)$ is called a \textit{Singh contraction} if there exist $p \in \mathbb{N}$ and $\alpha \in (0,\tfrac{1}{2})$ such that
\[
d(T^p x, T^p y) \leq \alpha \big(d(x,T^p x) + d(y,T^p y)\big), \quad \forall x,y \in X.
\]
\end{definition}

\section{Main Results}
\label{sec:main}
We introduce the following generalization which extends both Singh and Chatterjea conditions.

\begin{theorem}[Singh--Chatterjea contraction]
Let $(X,d)$ be a complete metric space and $T:X \to X$ a mapping. Suppose there exist $p \in \mathbb{N}$ and $\alpha \in (0,\tfrac{1}{2})$ such that
\[
d(T^p x, T^p y) \leq \alpha \big( d(x, T^p y) + d(y, T^p x) \big), \quad \forall x,y \in X.
\]
Then $T$ has a unique fixed point $x^\ast \in X$, and for any initial point $x_0 \in X$, the iterative sequence $\{T^n x_0\}$ converges to $x^\ast$.
\end{theorem}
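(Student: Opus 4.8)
The plan is to reduce the statement to the classical Chatterjea fixed point theorem applied to the single map $S := T^p$, and then transfer the conclusion from $S$ back to $T$. The key observation is that the hypothesis says \emph{precisely} that $S = T^p$ is a Chatterjea contraction on $(X,d)$ with the same constant $\alpha \in (0,\tfrac12)$.

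First I would fix an arbitrary $x_0 \in X$, set $x_{n+1} := S x_n$, and apply the Chatterjea inequality to the pair $(x_n, x_{n+1})$: the cross term $d(x_{n+1}, x_{n+1})$ vanishes, and a triangle-inequality bound on $d(x_n, x_{n+2})$ yields $d(x_{n+1}, x_{n+2}) \le k\, d(x_n, x_{n+1})$ with $k = \alpha/(1-\alpha) < 1$. Hence $\{x_n\} = \{S^n x_0\}$ is Cauchy, and by completeness converges to some $x^\ast \in X$. A second application of the inequality, now to the pair $(x_n, x^\ast)$ together with the triangle inequality, shows $d(x^\ast, S x^\ast) = 0$, so $x^\ast$ is a fixed point of $S$; and if $u, v$ are both fixed by $S$, the inequality gives $d(u,v) \le 2\alpha\, d(u,v)$, forcing $u = v$. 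Thus $S = T^p$ has a unique fixed point $x^\ast$, and every Picard orbit under $S$ converges to it.

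Next I would promote $x^\ast$ to a fixed point of $T$ via the commutation $T \circ S = S \circ T = T^{p+1}$: applying $S$ to $Tx^\ast$ gives $S(Tx^\ast) = T(S x^\ast) = Tx^\ast$, so $Tx^\ast$ is a fixed point of $S$, whence $Tx^\ast = x^\ast$ by uniqueness. Conversely, any fixed point of $T$ is fixed by $S = T^p$, so uniqueness for $S$ gives uniqueness for $T$. For the convergence of the full sequence $\{T^n x_0\}$, I would write $n = pm + r$ with $0 \le r \le p-1$, so that $T^n x_0 = S^m(T^r x_0)$; for each of the finitely many residues $r$, the sequence $(S^m(T^r x_0))_m$ converges to $x^\ast$ by the first step applied to the initial point $T^r x_0$, and taking the maximum over $r$ of the corresponding thresholds produces a single threshold in $n$, so $T^n x_0 \to x^\ast$.

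I expect the main subtleties to be two: first, that the Chatterjea condition, unlike the Banach condition, does not directly compare $d(Sx, Sy)$ with $d(x,y)$, so one must use the inequality once to extract a genuine contraction factor along consecutive iterates (to get a Cauchy sequence) and a second time to identify the limit as a fixed point; and second, the final passage from convergence of the $p$ subsequences $(S^m(T^r x_0))_m$ to convergence of $\{T^n x_0\}$, which requires making the thresholds uniform over the residue classes $r \in \{0,1,\dots,p-1\}$.
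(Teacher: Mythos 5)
Your proposal is correct and follows essentially the same route as the paper's proof: reduce to the Chatterjea argument for $S=T^p$ (contraction factor $\alpha/(1-\alpha)$ on consecutive differences, Cauchy, limit is the unique fixed point of $S$), then transfer to $T$ via $S(Tx^\ast)=T(Sx^\ast)=Tx^\ast$ and recover convergence of the full orbit from the $p$ residue-class subsequences. The only cosmetic difference is your extra care in making the convergence thresholds uniform over the finitely many residues, which the paper leaves implicit.
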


\begin{proof}
Set $S=T^p$. The hypothesis says that $S:X\to X$ satisfies the Chatterjea-type inequality
\[d(Sx,Sy)\le \alpha\big(d(x,Sy)+d(y,Sx)\big)\qquad\forall x,y\in X,\]
with $\alpha\in(0,\tfrac12)$. 

We proceed in steps.

\medskip\noindent\textbf{Step 1. Iterative construction.}\\
Fix $x_0\in X$ and define $x_{n+1}:=Sx_n$ for $n\ge0$, i.e.\ $x_n=S^n x_0$.  Set\[\delta_n:=d(x_{n+1},x_n)=d(Sx_n,x_n).\]

\medskip\noindent\textbf{Step 2. Contraction on successive differences.}\\
Apply the inequality for $(x_n,x_{n-1})$:\[d(Sx_n,Sx_{n-1})\le \alpha\big(d(x_n,Sx_{n-1})+d(x_{n-1},Sx_n)\big).\]
Since $Sx_{n-1}=x_n$ and $Sx_n=x_{n+1}$,\[d(x_{n+1},x_n)\le \alpha\, d(x_{n-1},x_{n+1}).\]
By the triangle inequality,
\[\delta_n \le \alpha\big(\delta_{n-1}+\delta_n\big).\]Hence\[\delta_n \le \frac{\alpha}{1-\alpha}\,\delta_{n-1}.\]
Let $r=\tfrac{\alpha}{1-\alpha}\in(0,1)$. 
By induction,
\[\delta_n \le r^n \delta_0,\qquad n\ge0.\]Thus $\delta_n\to0$ 
geometrically.\\

\medskip\noindent\textbf{Step 3. Cauchy property.}\\
For $m>n$,\[d(x_m,x_n)\le \sum_{k=n}^{m-1}\delta_k \le \sum_{k=n}^\infty r^k\delta_0= \frac{\delta_0 r^n}{1-r}\xrightarrow[n\to\infty]{}0.\]So $(x_n)$ is Cauchy. 
Completeness of $(X,d)$ yields a limit $x^\ast\in X$ with\[x_n \to x^\ast.\]\\

\medskip\noindent\textbf{Step 4. $x^\ast$ is a fixed point of $S$.}\\
Apply the inequality with 
$(x^\ast,x_n)$:\[d(Sx^\ast,x_{n+1})\le \alpha\big(d(x^\ast,x_{n+1})+d(x_n,Sx^\ast)\big).\]
Letting $n\to\infty$ gives
\[d(Sx^\ast,x^\ast)\le \alpha\,d(x^\ast,Sx^\ast).\]
Thus 
$(1-\alpha)\,d(Sx^\ast,x^\ast)\le0$, 
so $Sx^\ast=x^\ast$.\\

\medskip\noindent\textbf{Step 5. Uniqueness of the fixed point of $S$.}\\
If $y=Sy$, then\[d(x^\ast,y)\le 2\alpha\,d(x^\ast,y).\]
Since $1-2\alpha>0$, 
we get $d(x^\ast,y)=0$, 
hence $y=x^\ast$. So $S$ has a unique fixed point.\\

\medskip\noindent\textbf{Step 6. Convergence for arbitrary initial points.}\\
The above argument holds for any $x_0$, 
hence $S^n x\to x^\ast$ for all $x\in X$.\\

\medskip\noindent\textbf{Step 7. Passing back to $T$.}
Since $Sx^\ast=x^\ast$, 
we have $T^p x^\ast=x^\ast$. 
Then
\[S(Tx^\ast)=T^p(Tx^\ast)=T(T^p x^\ast)=T x^\ast,\]so $T x^\ast$ is also a fixed point of $S$. 
By uniqueness, 
$T x^\ast=x^\ast$.  
Thus $x^\ast$ is the unique fixed point of $T$.\\

\medskip\noindent\textbf{Step 8. Convergence of the full orbit.}\\
For each 
$r=0,1,\dots,p-1$,\[T^{pn+r}x_0=S^n(T^r x_0)\xrightarrow[n\to\infty]{}x^\ast.\]
Hence the entire sequence $(T^n x_0)$ converges to $x^\ast$.
Therefore $T$ has a unique fixed point $x^\ast$, and $T^n x_0\to x^\ast$ for all $x_0\in X$.
\end{proof}

\subsection{Example of a Singh-Chatterjea Mapping That Is Not Banach}
\label{sec:scnotbanach}

We construct an explicit example of a self-map $ T $ on the complete metric space $ X = [0, 1] $ with the standard metric $ d(x,y) = |x - y| $, such that
\begin{itemize}
\item $ T $ satisfies the \emph{Singh-Chatterjea condition} for $ p = 2 $: there exists $ \alpha \in [0, \tfrac{1}{2}) $ such that for all $ x, y \in X $,
    \[
    d(T^2 x, T^2 y) \leq \alpha \big[ d(x, T^2 y) + d(y, T^2 x) \big].
    \]
    \item $ T $ is \textbf{not} a Banach contraction: for any $ \lambda < 1 $, there exist $ x, y \in X $ such that $ d(Tx, Ty) > \lambda d(x,y) $.
    \item $ T $ has a unique fixed point.
\end{itemize}
This illustrates that the Singh-Chatterjea condition is strictly more general than the Banach contraction principle.

\subsection*{The Mapping}

Define $ T: [0,1] \to [0,1] $ by
\[
T(x) = 
\begin{cases}
1 - x & \text{if } 0 \leq x \leq \frac{1}{2}, \\
\frac{1}{2} & \text{if } \frac{1}{2} < x \leq 1.
\end{cases}
\]

Note that $ T $ is continuous except at $ x = \frac{1}{2} $ (right-continuous), and maps $ [0,1] $ into itself.

\subsection*{Compute $ T^2 = T \circ T $}

We compute $ T^2(x) = T(T(x)) $ for all $ x \in [0,1] $.
\begin{itemize}
\item If $ 0 \leq x \leq \frac{1}{2} $, then $ T(x) = 1 - x \in [\frac{1}{2}, 1] $, so $ T(T(x)) = T(1 - x) = \frac{1}{2} $.
    \item If $ \frac{1}{2} < x \leq 1 $, then $ T(x) = \frac{1}{2} $, so $ T(T(x)) = T(\frac{1}{2}) = 1 - \frac{1}{2} = \frac{1}{2} $.
\end{itemize}
Thus, for \textbf{all} $ x \in [0,1] $, we have
\[
T^2(x) = \frac{1}{2}.
\]

So $ T^2 $ is the \textbf{constant function} $ \frac{1}{2} $.

\subsection*{Verify Singh-Chatterjea Condition for $ p = 2 $}

Let $ x, y \in [0,1] $. Then
\[
d(T^2 x, T^2 y) = d\left(\frac{1}{2}, \frac{1}{2}\right) = 0.
\]
The right-hand side
\[
\alpha \big[ d(x, T^2 y) + d(y, T^2 x) \big] = \alpha \left[ d\left(x, \frac{1}{2}\right) + d\left(y, \frac{1}{2}\right) \right] \geq 0.
\]
Thus, for \textbf{any} $ \alpha \geq 0 $, we have
\[
d(T^2 x, T^2 y) = 0 \leq \alpha \left[ \left|x - \frac{1}{2}\right| + \left|y - \frac{1}{2}\right| \right].
\]
In particular, the inequality holds for any $ \alpha \in [0, \tfrac{1}{2}) $.

 So $ T $ satisfies the Singh-Chatterjea condition for $ p = 2 $.

\subsection*{$ T $ is Not a Banach Contraction}

Suppose, for contradiction, that there exists $ \lambda < 1 $ such that for all $ x, y \in [0,1] $,
\[
|Tx - Ty| \leq \lambda |x - y|.
\]

Choose $ x = 0 $, $ y = \frac{1}{2} - \varepsilon $ for small $ \varepsilon > 0 $. Then
\[
T(0) = 1 - 0 = 1, \quad T\left(\frac{1}{2} - \varepsilon\right) = 1 - \left(\frac{1}{2} - \varepsilon\right) = \frac{1}{2} + \varepsilon.
\]
So
\[
|T(0) - T(\tfrac{1}{2} - \varepsilon)| = \left|1 - \left(\frac{1}{2} + \varepsilon\right)\right| = \left|\frac{1}{2} - \varepsilon\right|.
\]
And
\[
|0 - (\tfrac{1}{2} - \varepsilon)| = \left|\frac{1}{2} - \varepsilon\right|.
\]
Thus
\[
\frac{|Tx - Ty|}{|x - y|} = 1.
\]
So for any $ \lambda < 1 $, when $ \varepsilon $ is small enough, we have
\[
|Tx - Ty| = |x - y| > \lambda |x - y|.
\]

Therefore, $ T $ is \textbf{not} a Banach contraction.

\subsection*{Fixed Point}

We already saw that $ T^2(x) = \frac{1}{2} $ for all $ x $. So the unique fixed point of $ T^2 $ is $ \frac{1}{2} $.

Check if $ T(\frac{1}{2}) = \frac{1}{2} $
\[
T\left(\frac{1}{2}\right) = 1 - \frac{1}{2} = \frac{1}{2}.
\]
So $ \frac{1}{2} $ is a fixed point of $ T $.

Uniqueness: Suppose $ Tz = z $. Then $ T^2 z = T(Tz) = Tz = z $, so $ z $ is a fixed point of $ T^2 $. But $ T^2 $ is constant $ \frac{1}{2} $, so $ z = \frac{1}{2} $.

Thus, $ T $ has a unique fixed point at $ x = \frac{1}{2} $.

\begin{remark}
We have constructed a simple, piecewise linear mapping $ T $ on $ [0,1] $ that
\begin{itemize}
\item Is not a Banach contraction (Lipschitz constant = 1),
\item Yet satisfies the Singh-Chatterjea condition for $ p = 2 $ with any $ \alpha \in [0, \tfrac{1}{2}) $,
\item Has a unique fixed point.
\end{itemize}

This demonstrates that the Singh-Chatterjea framework meaningfully extends beyond classical Banach contractions.
\end{remark}

\subsection{Banach Contractions Are Eventually Singh-Chatterjea}

We now show that the class of Singh-Chatterjea contractions is broad enough to include \emph{all} Banach contractions not directly, but through iteration. Specifically, for any Banach contraction $ T $, there exists an integer $ p \geq 1 $ such that $ T $ satisfies the Singh-Chatterjea condition for that $ p $.

\begin{theorem}[Banach $\Rightarrow$ Singh-Chatterjea for some $ p $]
Let $ (X, d) $ be a metric space and $ T: X \to X $ a Banach contraction with constant $ \lambda \in [0, 1) $, i.e.,
\[
d(Tx, Ty) \leq \lambda \, d(x, y) \quad \forall x, y \in X.
\]
Then there exists an integer $ p \geq 1 $ such that $ T $ is a Singh-Chatterjea contraction for that $ p $, meaning:
\[
d(T^p x, T^p y) \leq \alpha \left[ d(x, T^p y) + d(y, T^p x) \right] \quad \forall x, y \in X,
\]
for some $ \alpha \in [0, \tfrac{1}{2}) $.
\end{theorem}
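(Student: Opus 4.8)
The plan is to sandwich the desired inequality by comparing both of its sides with $d(x,y)$. The upper estimate is immediate: iterating the contraction $p$ times gives $d(T^px,T^py)\le\lambda^p d(x,y)$. The substantive step is a matching \emph{lower} bound for the cross‑term sum, which I would isolate as a lemma: for every $p\ge 1$ and all $x,y\in X$,
\[
d(x,T^py)+d(y,T^px)\ \ge\ (1-\lambda^p)\,d(x,y).
\]

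To establish this lemma I would add two pairs of triangle inequalities. From $d(x,T^px)\le d(x,T^py)+d(T^py,T^px)$ and $d(y,T^py)\le d(y,T^px)+d(T^px,T^py)$, together with $d(T^px,T^py)\le\lambda^p d(x,y)$, one obtains
\[
d(x,T^py)+d(y,T^px)\ \ge\ d(x,T^px)+d(y,T^py)-2\lambda^p d(x,y).
\]
From $d(x,y)\le d(x,T^px)+d(T^px,y)$ and $d(x,y)\le d(x,T^py)+d(T^py,y)$ one obtains
\[
d(x,T^py)+d(y,T^px)\ \ge\ 2\,d(x,y)-d(x,T^px)-d(y,T^py).
\]
Adding these two inequalities and dividing by $2$ cancels the awkward terms $d(x,T^px)$ and $d(y,T^py)$ and yields the lemma. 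Note that this argument uses only the metric axioms and the iterated contraction estimate — no fixed point and no completeness — which is consistent with the hypothesis that $(X,d)$ is merely a metric space.

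Granting the lemma, the rest is bookkeeping. If $\lambda=0$ then $T$, hence $T^p$, is constant and the Singh–Chatterjea inequality holds trivially with $p=1$ and $\alpha=0$; so assume $\lambda\in(0,1)$. Then for all $x,y\in X$ the two estimates combine into
\[
d(T^px,T^py)\ \le\ \lambda^p d(x,y)\ \le\ \frac{\lambda^p}{1-\lambda^p}\bigl(d(x,T^py)+d(y,T^px)\bigr),
\]
where the second inequality is just the lemma multiplied by $\lambda^p/(1-\lambda^p)>0$. Since $0<\lambda<1$ we have $\lambda^p\to0$, so we may fix $p$ large enough that $\lambda^p<\tfrac13$; then $\alpha:=\lambda^p/(1-\lambda^p)$ lies in $[0,\tfrac12)$, and with this $p$ and this $\alpha$ the map $T$ is a Singh–Chatterjea contraction.

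I expect the only real obstacle to be spotting the correct combination of triangle inequalities behind the lemma. A tempting alternative — invoking the Banach fixed point $x^\ast$ and writing $d(x,T^py)\ge d(x,x^\ast)-\lambda^p d(x^\ast,y)$ and symmetrically — also delivers the same lower bound, but it needs completeness to produce $x^\ast$, whereas the two‑sided triangle argument above does not; I would therefore prefer the latter. Everything past the lemma is routine estimation, and one gets the clean quantitative statement that $p$ may be any integer with $\lambda^p<\tfrac13$.
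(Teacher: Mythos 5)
Your proof is correct, and at the top level it follows the same strategy as the paper: pass to the iterate $S=T^p$, which is a Banach contraction with constant $\lambda^p$, and choose $p$ so large that this constant is below $\tfrac13$, at which point the Chatterjea-type inequality follows. The difference is that the paper stops there, asserting as ``a known result'' that a Banach contraction with constant at most $\tfrac13$ is a Chatterjea contraction (and loosely quoting the Chatterjea constant as $\lambda^{p}$ itself), whereas you actually prove the needed implication. Your lemma $d(x,T^py)+d(y,T^px)\ge(1-\lambda^p)\,d(x,y)$ is correct --- adding your two pairs of triangle inequalities does cancel the terms $d(x,T^px)$ and $d(y,T^py)$ and gives exactly this bound --- and it uses only the metric axioms, consistent with the theorem's hypothesis that $(X,d)$ is merely a metric space. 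It also identifies the correct resulting constant $\alpha=\lambda^p/(1-\lambda^p)$, which exposes a boundary point the paper glosses over: with the paper's choice $\lambda^{p_{\min}}\le\tfrac13$ one could have $\lambda^{p_{\min}}=\tfrac13$ exactly, giving $\alpha=\tfrac12$, which falls outside the admissible range $[0,\tfrac12)$; your strict requirement $\lambda^p<\tfrac13$ avoids this. In short, your argument is self-contained, slightly sharper, and fills in precisely the step the paper leaves as a citation.
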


\begin{proof}
Since $ T $ is a Banach contraction with constant $ \lambda $, by induction, for any $ p \geq 1 $, the iterate $ T^p $ is also a Banach contraction with constant $ \lambda^p $:
\[
d(T^p x, T^p y) \leq \lambda^p \, d(x, y).
\]
Because $ \lambda < 1 $, we have $ \lambda^p \to 0 $ as $ p \to \infty $. Therefore, there exists a minimal integer $ p_{\min} \geq 1 $ such that:
\[
\lambda^{p_{\min}} \leq \frac{1}{3}.
\]
It is a known result in fixed point theory, and we proved for both Chatterjea and Kannan (with constant $\leq \frac{1}{2}$) that any Banach contraction with constant $ \leq \frac{1}{3} $ is also a Chatterjea contraction. That is, for such $ p_{\min} $, we have:
\[
d(T^{p_{\min}} x, T^{p_{\min}} y) \leq \alpha \left[ d(x, T^{p_{\min}} y) + d(y, T^{p_{\min}} x) \right],
\]
with $ \alpha = \lambda^{p_{\min}} \leq \frac{1}{3} < \frac{1}{2} $. Hence, $ T $ is a Singh-Chatterjea contraction for $ p = p_{\min} $.
\end{proof}

\begin{remark}[Minimal $ p $]
The minimal such $ p $ is given explicitly by:
\[
p_{\min} = \left\lceil \frac{\ln(1/3)}{\ln \lambda} \right\rceil = \left\lceil \frac{\ln 3}{-\ln \lambda} \right\rceil,
\]
where $ \lceil \cdot \rceil $ denotes the ceiling function. This is the smallest integer $ p $ for which $ \lambda^p \leq \frac{1}{3} $.
\end{remark}

\begin{example}[Concrete case: $ \lambda = 0.9 $]
Consider the Banach contraction $ T: \mathbb{R} \to \mathbb{R} $ defined by $ T(x) = 0.9x $, with $ d(x,y) = |x - y| $. Here, $ \lambda = 0.9 $.

We compute:
\[
p_{\min} = \left\lceil \frac{\ln 3}{-\ln 0.9} \right\rceil = \left\lceil \frac{1.0986}{0.1054} \right\rceil = \left\lceil 10.43 \right\rceil = 11.
\]

Indeed:
\[
\lambda^{10} = 0.9^{10} \approx 0.3487 > \tfrac{1}{3} \approx 0.3333, \quad
\lambda^{11} = 0.9^{11} \approx 0.3138 < \tfrac{1}{3}.
\]

Thus, $ T^{11} $ is a Banach contraction with constant $ \approx 0.3138 \leq \frac{1}{3} $, hence is a Chatterjea contraction — so $ T $ is a Singh-Chatterjea contraction for $ p = 11 $.

Note that for $ p = 1 $, $ T $ is \emph{not} a Chatterjea contraction: for $ x = 1 $, $ y = 0 $, we have:
\[
|Tx - Ty| = 0.9, \quad |x - Ty| + |y - Tx| = |1 - 0| + |0 - 0.9| = 1.9, \quad \frac{0.9}{1.9} \approx 0.474 > \tfrac{1}{3},
\]
and in fact, no $ \alpha < \tfrac{1}{2} $ satisfies the Chatterjea condition for all $ x,y $ when $ \lambda = 0.9 $. But after 11 iterations, it does.
\end{example}

\begin{remark}[Universality of Singh-Chatterjea]
This result shows that the family of Singh-Chatterjea contractions (over all $ p \geq 1 $) is \emph{universal} for Banach contractions: 
\[
\text{Banach} \subset \bigcup_{p=1}^{\infty} \text{SinghChatterjea}_p.
\]
Moreover, since we have already exhibited mappings that are Singh-Chatterjea but not Banach (see Section~\ref{sec:scnotbanach}), the inclusion is strict:
\[
\text{Banach} \subsetneq \bigcup_{p=1}^{\infty} \text{SinghChatterjea}_p.
\]
This underscores the remarkable generality of the Singh-Chatterjea framework: it captures not only non-Banach mappings, but \emph{all} Banach mappings via iteration.
\end{remark}

 \begin{figure}[ht]
     \centering
     \includegraphics[width=0.8\textwidth]{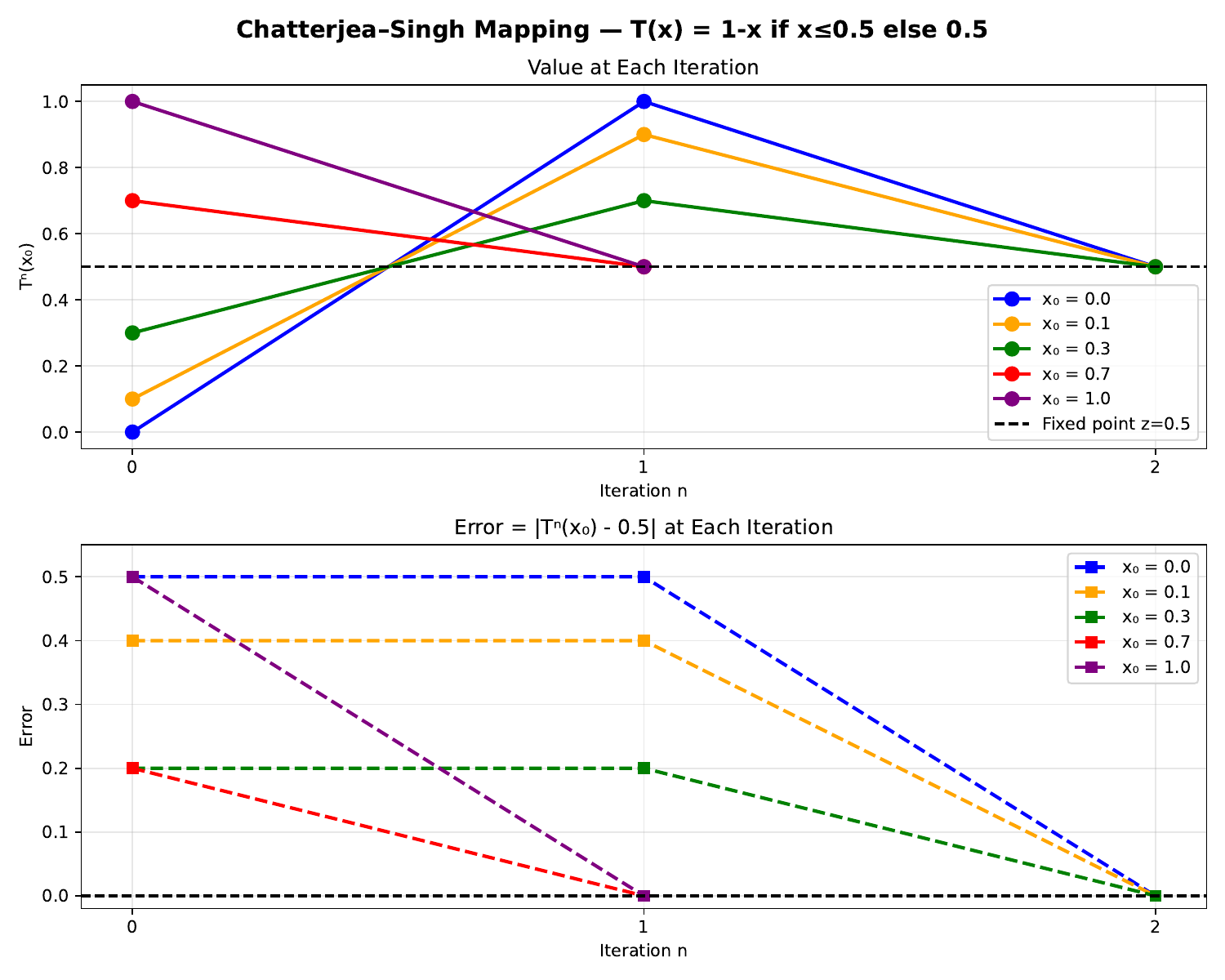}
     \caption{Convergence of a Singh-Chatterjea mapping (non-Banach example) from multiple initial points.}
     \label{fig:cs-convergence}
 \end{figure}

\section{Conclusion}
\label{sec:concl}
We have proposed a Singh-type generalization of the Chatterjea contraction, proved a fixed point theorem under this framework, and illustrated its validity through an example. Our results not only unify two classical contractions but also open a path for further applications in nonlinear analysis and numerical methods.


\end{document}